\numberwithin{equation}{section}
\theoremstyle{plain}
\newtheorem{theorem}{Theorem}[section]
\newtheorem{lemma}[theorem]{Lemma}
\theoremstyle{definition}
\newtheorem{definition}[theorem]{Definition}
\newtheorem{remark}[theorem]{Remark}
\theoremstyle{remark}
\newcommand{\N}{\mathbb N}
\newcommand{\Z}{\mathbb Z}
\newcommand{\R}{\mathbb R}
\DeclareMathOperator*{\esssup}{ess\,sup}
\newcommand{\dd}{\mathrm{d}}
\newcommand{\dx}{\;\dd x}
\newcommand{\dy}{\;\dd y}
\newcommand{\e}{\varepsilon}
\newcommand{\del}{\delta}
\renewcommand{\o}{\Omega}
\newcommand{\per}{\mathcal{P}}
\newcommand{\average}{{\mathchoice {\kern1ex\vcenter{\hrule height.4pt
width 6pt
depth0pt} \kern-9.7pt} {\kern1ex\vcenter{\hrule height.4pt width 4.3pt
depth0pt}
\kern-7pt} {} {} }}
\title[A Note on Homogenization Effects on Phase Transition Problems]{A Note on Homogenization Effects on Phase Transition Problems}
\author[Adrian Hagerty]{Adrian Hagerty}
\keywords{}
\begin{document}

\begin{abstract}
In fluid-fluid phase transitions problems featuring small scale heterogeneity, we see that when the scale heterogeneity is sufficiently small, the periodic potential function $W(x,p)$ can be replaced with a homogenized potential function $W_H(p)$. This allows for a reduction to a problem already studied by Fonseca and Tartar \cite{fonseca89}. In particular, we see an isotropic transition potential, in contrast with the case where the scales are roughly comensurate \cite{CFHP18}. This note is intended as a prelude to a more complete analysis of the full range of scales.
\end{abstract}

\maketitle

\section{Introduction}

In this manuscript, we remark upon a particular case in the study of  interaction between fluid-fluid phase transition and homogenization in the presence of small scale heterogeneities in the fluid. This is part of an ongoing project to understand the limiting behavior of the variational problem
\[
\mathcal{F}_{\e,\delta}(u) := \int_\o \left[\, \frac{1}{\del} W \left( \frac{x}{\e}, u(x) \right)
    + \del |\nabla u(x)|^2 \,\right]\dx,
\] 
where $W: \R^N \times \R^d \to [0, \infty)$ is a double-well potential that is 1-periodic in its first argument. Here, the periodicity at scale $\e$ fixes the scaling of the heterogeneity while $\del$ corresponds to the thickness of our transition layers.

We characterize the limiting behavior of minimizers to $\mathcal{F}_{\e,\del}$ by identifying the $\Gamma$-limit of $\mathcal{F}_{\e,\delta}$ as $\e,\delta \to 0$ for different regimes corresponding to the relative behavior of $\e$ and $\del$.

In \cite{CFHP18}, this problem was studied in the case where $\e$ and $\del$ are commensurate. This was incorporated by assuming that $\del = \e$, that is to say
\[
\mathcal{F}_{\e}(u) := \int_\o \left[\, \frac{1}{\e} W \left( \frac{x}{\e}, u(x) \right)
    + \e |\nabla u(x)|^2 \,\right]\dx.
\]
In this regime, the $\Gamma$-limit was an anisotropic perimeter caused by potential mismatch between the direction of periodicity and the orientation of the interface. 

The limiting behavior when the phase transitions occur at a finer scale than the homogenization, $\delta << \e$, will be the subject of a forthcoming publication, currently under preparation.

In this paper we study a scaling in which the homogenization effects occur far more rapidly than that of the phase transition, namely $\e << \delta$. For our key lemma, we will need to require a certain quantitative control to this scale, namely 
\[ \frac{\e}{\del^{\frac{3}{2}}} \to 0.\]
An identical scaling is observed in the paper of Ansini, Braides and Piat \cite{AnsBraChi2} who consider energies of the form
\[ \int_{\o} \left[ \frac{1}{\del} W(u(x)) + \del f \left( \frac{x}{\e} , \nabla u \right) \right] \dx.\]
In their consideration of scalings of $\e$ finer than $\del$ in Section 4.3, they require the explicit relationship $\del << \e \sqrt \e$. It is not yet clear if this scaling is a necessary feature of problems incorporating fine scale homogenization or merely a technical consideration.

In the presence of this rapid periodicity, we can pass from the periodic potential function $W(x, p)$ to a homogenized potential function $W_H(p)$ depending only on the value of the function $u$ and not on position. This will allow us to compare the limiting behavior to the well-studied case of functionals of the form
\[
\mathcal{F}_{\delta}(u) := \int_\o \left[\, \frac{1}{\del} W \left( u(x) \right)
    + \del |\nabla u(x)|^2 \,\right]\dx,
\] 
as found in the work of Fonseca and Tartar \cite{fonseca89}.


\subsection{Statement of the main results}\label{sec:mainresult}

In the following, $Q\subset\R^N$ denotes the unit cube centered at the origin with faces orthogonal to the coordinate axes, $Q:=(-1/2,1/2)^N$. The set $\o \subset \R^N$ will always be a bounded, open domain with Lipschitz boundary. 

Consider a double well potential $W:\R^N\times\R^d\to[0,\infty)$ satisfying the following properties:
\begin{itemize}
\item [(H0)] $x\mapsto W(x,p)$ is $Q$-periodic for all $p\in\R^d$,
\item [(H1)] $W$ is a Carath\'{e}odory function, \emph{i.e.},
\begin{itemize}
\item[(i)] for all $p\in\R^d$ the function $x\mapsto W(x,p)$ is measurable, 
\item[(ii)] for a.e. $x\in Q$ the function $p\mapsto W(x,p)$ is continuous,
\end{itemize}
\item [(H2)] there exist $a,b\in\R^d$ such that $W(x,p)=0$ if and only if $p\in\{a,b\}$, for a.e. $x\in Q$,
\item[(H3)] there exists a continuous function $W_c:\R^d\to[0,\infty)$ such that $W_c(p)\leq W(x,p)$ for a.e. $x\in Q$ and $W_c(p)=0$ if and only if $p\in\{a,b\}$.
\item [(H4)] there exist $C>0$ and $q\geq2$ such that $\frac{1}{C}|p|^q-C\leq W(x,p)\leq C(1+|p|^q)$ for a.e. $x\in Q$ and all $p\in\R^d$.
\item [(H5)] $W$ is locally Lipschitz in $p$, that is, for every $K \subset \R^d$ compact there is a constant $L$ such that
\[ |W(x,p) - W(x,q)| \leq L|p-q| \]
for almost every $x \in Q$ and every $p,q \in K$.
\end{itemize}

In the case where $\e << \del$, the homogenization effects occur so rapidly that the system is essentially homogenized before interacting with the phase transition problem. In this case, we prove that the $\Gamma$-limit of $\mathcal{F}_{\e,\del}$ coincides with the interfacial energy associated with a homogenized potential.

\begin{definition}
We define the functional $F^H_0:L^1(\o;\R^d)\to[0,+\infty]$ as
\begin{equation}\label{eq:fHom0}
F^H_0(u):=
\begin{cases}
K_H \per(\{u = a\}; \o) & \text{ if } u \in BV(\o; \{a,b\}),\\
&\\
+\infty & \text{ otherwise}.
\end{cases}
\end{equation}
Here the transition energy density $K_H$ is defined as
\begin{equation}\label{eq:Kh}
K_H := 2 \inf \left\{ \int_0^1 \sqrt{W_H(g(s))} |g'(s)| \dd s : g\in C^1_{pw}([0,1];\R^d;a,b)
    \,\right\},
\end{equation}
where $C^1_{pw}([0,1];\R^d;a,b)$ denotes the space of piecewise $C^1$ curves from $[0,1]$ to $\R^d$ such that $g(0)=a$ and $g(1)=b$, and the homogenized potential $W_H:\R^d\to[0,+\infty)$ is given by
\begin{equation}\label{eq:homPot}
W_H(p) := \int_Q W(y, p) \dy 
\end{equation}
\end{definition}

The main result of this paper is the following $\Gamma$-convergence result in the case where the homogenization parameter $\e$ is sufficiently small with respect to the phase transition parameter $\delta$.

\begin{theorem}\label{thm:homFirst}
Let $\{\e_n\}_{n\in\N}$ $\{\del_n\}_{n\in\N}$ be two infinitesimal sequences such that
\[
\lim_{n\to\infty}\frac{\del_n^{\frac{3}{2}}}{\e_n} \to +\infty.
\]
Set $F_{n}:=\mathcal{F}_{\e_n,\del_n}$.
Assume that $W$ satisfies hypotheses (H0)-(H4).
Then the following hold:
\begin{enumerate}
\item If $\{u_n\}_{n\in\N} \subset H^1(\o; \R^d)$ is such that
\[
\sup_{n \in \mathbb{N}} F_n(u_n) < +\infty,
\]
then, up to a subsequence (not relabeled), we have $u_n \to u$ in $L^1(\o; \R^d)$ for some $u \in BV(\o; \{a,b\})$.
\item As $n \to \infty$, we have $F_n \stackrel{\Gamma-L^1}{\longrightarrow} F^H_0$.
\end{enumerate}

\end{theorem}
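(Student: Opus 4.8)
The plan is to reduce $F_{n}$ to the homogenized Modica--Mortola functional
\[
\mathcal{F}^{H}_{\del}(u):=\int_{\o}\Bigl[\tfrac{1}{\del}W_{H}(u)+\del|\nabla u|^{2}\Bigr]\dx,
\]
and then to invoke Fonseca and Tartar \cite{fonseca89}, who (applied to the potential $W_{H}$) identify $\Gamma\text{-}\lim_{n}\mathcal{F}^{H}_{\del_{n}}=F^{H}_{0}$ together with the corresponding $L^{1}$-compactness. Here one first checks that $W_{H}$ inherits the structural hypotheses of \cite{fonseca89}: continuity and $q$-growth by integrating (H1),(H4) over $Q$, and $W_{H}(p)=0\iff p\in\{a,b\}$ since $W_{H}\ge W_{c}$ by (H3) while $W_{H}(a)=W_{H}(b)=0$ by (H2). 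The whole theorem then rests on a single comparison estimate: because the gradient terms of $F_{n}$ and $\mathcal{F}^{H}_{\del_{n}}$ coincide, for every $u\in H^{1}(\o;\R^{d})$ with values in a fixed compact set $K$ one has
\[
\bigl|F_{n}(u)-\mathcal{F}^{H}_{\del_{n}}(u)\bigr|
=\frac{1}{\del_{n}}\Bigl|\int_{\o}\bigl[W(\tfrac{x}{\e_{n}},u)-W_{H}(u)\bigr]\dx\Bigr|
\le \frac{C_{K}\,\e_{n}}{\del_{n}}\int_{\o}|\nabla u|\dx,
\]
which is the key lemma.

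To prove the comparison estimate I would tile $\o$ (up to a boundary layer) by cubes $Q_{i}$ of side $\e_{n}$ adapted to the period lattice, so that the exact periodicity (H0) gives $\int_{Q_{i}}W(\tfrac{x}{\e_{n}},\bar u_{i})\dx=\e_{n}^{N}W_{H}(\bar u_{i})$, where $\bar u_{i}$ is the mean of $u$ on $Q_{i}$. Replacing $u$ by $\bar u_{i}$ inside both $W(\tfrac{x}{\e_{n}},\cdot)$ and $W_{H}$ via the local Lipschitz bound (H5) costs at most $2L_{K}\int_{Q_{i}}|u-\bar u_{i}|\dx$ per cube, and the Poincar\'e--Wirtinger inequality on $Q_{i}$ bounds this by $C_{K}\e_{n}\int_{Q_{i}}|\nabla u|\dx$; summing over $i$ yields the displayed estimate, the boundary cubes being controlled by (H4). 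The decisive point is the scaling: any sequence with $\sup_{n}F_{n}(u_{n})<\infty$ satisfies $\del_{n}\int_{\o}|\nabla u_{n}|^{2}\le C$, so $\int_{\o}|\nabla u_{n}|\le|\o|^{1/2}\bigl(\int_{\o}|\nabla u_{n}|^{2}\bigr)^{1/2}\le C\del_{n}^{-1/2}$ by Cauchy--Schwarz, whence
\[
\bigl|F_{n}(u_{n})-\mathcal{F}^{H}_{\del_{n}}(u_{n})\bigr|\le C\,\frac{\e_{n}}{\del_{n}}\cdot\del_{n}^{-1/2}=C\,\frac{\e_{n}}{\del_{n}^{3/2}}\longrightarrow0,
\]
which is exactly where the hypothesis $\del_{n}^{3/2}/\e_{n}\to+\infty$ is used.

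With the key lemma in hand the three assertions follow. For compactness, $\sup_{n}F_{n}(u_{n})<\infty$ forces $\sup_{n}\mathcal{F}^{H}_{\del_{n}}(u_{n})<\infty$, so the Fonseca--Tartar compactness produces a subsequence with $u_{n}\to u$ in $L^{1}$ and $u\in BV(\o;\{a,b\})$; the coercivity behind this step is $W\ge W_{c}$ from (H3) together with the $q$-growth (H4), which also serve to reduce the remaining estimates to functions of uniformly bounded range. For the $\Gamma$-$\liminf$ inequality, given $u_{n}\to u$ we may assume $\liminf_{n}F_{n}(u_{n})<\infty$ and pass to a subsequence of bounded energy; the key lemma gives $F_{n}(u_{n})\ge\mathcal{F}^{H}_{\del_{n}}(u_{n})-o(1)$, and the Fonseca--Tartar lower bound yields $\liminf_{n}F_{n}(u_{n})\ge F^{H}_{0}(u)$. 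For the $\Gamma$-$\limsup$ inequality, when $u\in BV(\o;\{a,b\})$ I would take the Fonseca--Tartar recovery sequence $v_{n}\to u$ with $\mathcal{F}^{H}_{\del_{n}}(v_{n})\to F^{H}_{0}(u)$ --- which has uniformly bounded range, being built from the optimal profile joining $a$ to $b$ --- so that the key lemma gives $F_{n}(v_{n})\le\mathcal{F}^{H}_{\del_{n}}(v_{n})+o(1)\to F^{H}_{0}(u)$; the case $u\notin BV(\o;\{a,b\})$ is vacuous.

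The main obstacle is the key lemma, and within it the balance of three scales: the $1/\del_{n}$ weight on the potential, the factor $\e_{n}$ gained by averaging $W(\tfrac{x}{\e_{n}},\cdot)$ over one period through Poincar\'e, and the only a priori gradient control $\int_{\o}|\nabla u_{n}|\le C\del_{n}^{-1/2}$. Their product is precisely $\e_{n}\del_{n}^{-3/2}$, which is why the oscillatory potential becomes indistinguishable from its linear average $W_{H}$ under the stated hypothesis; conversely, it is essential that one averages $W$ and not $\sqrt{W}$, since Jensen's inequality gives $\int_{Q}\sqrt{W(y,p)}\dy\le\sqrt{W_{H}(p)}$, so a naive geodesic lower bound with the oscillating potential would point the wrong way. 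The remaining work --- adapting the cubes to the period lattice, estimating the cubes meeting $\partial\o$, and reducing to uniformly bounded range so that the constant in (H5) is uniform (via the coercivity (H3)--(H4)) --- is routine but must be carried out to make the averaging identity and the Lipschitz replacement rigorous.
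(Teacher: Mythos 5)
Your proposal follows essentially the same route as the paper: tile $\o$ by $\e_n$-cubes adapted to the period lattice, use (H0) to replace $W(\tfrac{x}{\e_n},\cdot)$ by its cell average up to a Lipschitz-times-oscillation error, control the oscillation by Poincar\'e--Wirtinger on each cube and Cauchy--Schwarz globally, observe that the resulting error is of order $\e_n/\del_n^{3/2}$, and read off the limit from Fonseca--Tartar. The one place where your plan diverges from the paper, and where the deferred step is not as routine as you suggest, is the reduction to uniformly bounded range. Your key lemma is stated only for $u$ with values in a fixed compact set $K$, but an energy-bounded sequence satisfies only $\int_\o W(\tfrac{x}{\e_n},u_n)\dx\le C\del_n$, hence an $L^q$ bound via (H4) and no $L^\infty$ bound; and truncating $u_n$ itself (say by radial projection onto a ball) is problematic for the liminf inequality because (H0)--(H5) do not guarantee that $W(x,\cdot)$ decreases under such a projection. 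The paper sidesteps this by truncating the \emph{potential} instead: it sets $\widetilde{W}:=\min\{W,M\}$ with $M$ chosen so large that the constant $K_H$ computed from $\widetilde{W}_H$ equals the one computed from $W_H$ (possible because optimal profiles stay in a fixed ball $|p|\le R$). By (H4)--(H5) the function $\widetilde{W}$ is globally Lipschitz and bounded, so the comparison lemma applies to \emph{every} $H^1$ sequence with $\sup_n \del_n\int_\o|\nabla u_n|^2\dx<\infty$, and since $\widetilde{W}\le W$ the truncated functional is still a lower bound for $F_n$, which is all the liminf inequality needs; for the limsup inequality the recovery sequence has range in $\{|p|\le R\}$, where $W=\widetilde{W}$. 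With that modification your argument coincides with the paper's; everything else, including the scaling bookkeeping and the verification that $W_H$ satisfies the Fonseca--Tartar hypotheses, is correct.
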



\section{Preliminaries}\label{sec:prel}

In this section we collect basic notions needed in the paper.

\subsection{Sets of finite perimeter}

We recall the definition and some well known facts about sets of finite perimeter (we refer the reader to \cite{AFP} for more details).

\begin{definition}
Let $E\subset\R^N$ with $|E|<\infty$ and let $\o\subset\R^N$ be an open set.
We say that $E$ has \emph{finite perimeter} in $\o$ if
\[
P(E;\o):=\sup\left\{\, \int_E \mathrm{div}\varphi \,\dd x \,:\, \varphi\in C^1_c(\o;\R^N)\,,\, \|\varphi\|_{L^\infty}\leq1  \,\right\}<\infty\,.
\]
\vspace{0\baselineskip}
\end{definition}

\begin{remark}\label{rem:defvar}
$E\subset\R^N$ is a set of finite perimeter in $\o$ if and only if $\chi_E\in BV(\o)$, \emph{i.e.}, the distributional derivative $D\chi_E$ is a finite vector valued Radon measure in $\o$, with
\[
\int_{\R^N} \varphi \,\dd D\chi_E=\int_E \mathrm{div}\varphi \,\dd x
\]
for all $\varphi\in C^1_c(\o;\R^N)$, and $|D\chi_E|(\o)=P(E;\o)$. \vspace{0.8\baselineskip}
\end{remark}

\begin{remark}
Let $\o\subset\R^N$ be an open set, let $a,b\in\R^d$, and let $u\in L^1(\o;\{a,b\})$.
Then $u$ is a function of \emph{bounded variation} in $\o$, and we write $u\in BV(\o;\{a,b\})$, if the set $\{u=a\}:=\{ x\in \o \,:\, u(x)=a\}$ has finite perimeter in $\o$. \vspace{0.5\baselineskip}
\end{remark}

\begin{definition}\label{def:mun}
Let $E\subset\R^N$ be a set of finite perimeter in the open set $\o\subset\R^N$. We define $\partial^* E$, the \emph{reduced boundary} of $E$, as the set of points $x\in\R^N$ for which the limit
\[
\nu_E(x):=-\lim_{r\to0}\frac{D\chi_E(x+rQ)}{|D\chi_E|(x+rQ)}
\]
exists and is such that $|\nu_E(x)|=1$.
The vector $\nu_E(x)$ is called the \emph{measure theoretic exterior normal} to $E$ at $x$. \vspace{0.5\baselineskip}
\end{definition}


\subsection{$\Gamma$-convergence}

We refer to \cite{Braides} and \cite{dalmaso93} for a complete study of $\Gamma$-convergence in metric spaces.

\begin{definition}\label{def:gc}
Let $(X,\mathrm{m})$ be a metric space. We say that $F_n:X\to[-\infty,+\infty]$ $\Gamma$-converges to $F:X\to[-\infty,+\infty]$,
and we write $F_n\stackrel{\Gamma-\mathrm{m}}{\longrightarrow} F$, if the following hold:
\begin{itemize}
\item[(i)] for every $x\in X$ and every $x_n\to x$ we have
\[
F(x)\leq\liminf_{n\to\infty} F_n(x_n)\,,
\]
\item[(ii)] for every $x\in X$ there exists $\{x_n\}_{n=1}^\infty\subset A$ (so called \emph{a recovery sequence}) with $x_n\to x$ such that
\[
\limsup_{n\to\infty} F_n(x_n)\leq F(x)\,.
\]
\end{itemize} \vspace{0\baselineskip}
\end{definition}


\section{Main Result}\label{sec:homfirst}

We proceed to prove our main result, the $\Gamma$ convergence result in the case where the homogenization occurs at a much smaller scale than the phase transition. To be precise, we consider the scaling
\[
\frac{\e}{\delta^{\frac{3}{2}}} \to 0
\]

\begin{remark}
The reason that this scaling is necessary as opposed to the more general case without a factor of $\frac{3}{2}$ is not yet clear. Indeed, if one could show that a sequence $u_{\e}$ with bounded energy satisfied
\[ \lim_{\e \to 0} F_{\e}(u_{\e}; Q \setminus \{|x_N| > \delta\}) = 0 \]
then this theorem would follow in the more general scaling $\e << \delta$.
\end{remark}

First, in order to rule out possible pathological behavior corresponding to large values of $u$, we will introduce a truncated potential $\widetilde{W}$.

\begin{definition}\label{def:Wtilde}
Let $R>0$ be given such that every minimizing curve $g\in C^1_{pw}([0,1];\R^d;a,b)$ for the minimization problem defining $K_H$ (see \eqref{eq:Kh}) is such that $|g(t)|\leq R$ for every $t\in[-1,1]$. Let
\[
M:=\esssup_{x\in\Omega}\max_{|p|\leq R} W(x,p).
\]
and define the \emph{truncated potential} $\widetilde{W}:\Omega\times\R^d\to[0,\infty)$ as
\[
\widetilde{W}(x,p):=\min\{ W(x,p), M \}.
\]
\end{definition}

\begin{remark}
The truncated potential $\widetilde{W}$ is Lipschitz (not only locally).
Moreover, note that $0<M<+\infty$. Indeed, thanks to the upper bound given by (H4).
\end{remark}

The proof of Theorem \ref{thm:homFirst} is based on a convergence result result (Lemma \ref{lem:wHom})
stating that in the functional $F_n$ it is possible to \emph{substitute} the (truncated) energy with the a homogenized energy. Thus, we introduce the intermediate energy we will be using.

\begin{definition}
We define the \emph{homogenized diffuse energy} $F^{H}_{\e}:L^1(\o;\R^d)\to[0,+\infty]$ by
\[
F^{H}_n(u) = \int_{\o} \left[\, \frac{1}{\del_n} \widetilde{W}_H(u(x)) + \del_n |\nabla u(x)|^2 \,\right] \dx
\]
for $u\in H^1(\o;\R^d)$ and $+\infty$ otherwise. Here $\widetilde{W}_H$ is defined as
\[ \widetilde{W}_H(p) = \int_Q \widetilde{W}(y,p) \dy\]
just as in \ref{eq:homPot}.
\end{definition}

We now prove that as $n \to \infty$, the limiting behavior of $F^H_n$ totally captures the limiting behavior the truncated problem.

\begin{lemma}\label{lem:wHom}
Let $\{u_n\}_{n\in\N} \subset H^1(\o; \R^d)$ be such that
\begin{equation}\label{eq:condition}
\sup_{n \in \N} \int_{\o} \del_n |\nabla u_n|^2 \dx < \infty.
\end{equation}
Then
\[
\lim_{n \to \infty} \left|\, \frac{1}{\del_n}\int_\o
    \left[\, \widetilde{W}\left(\frac{x}{\e_n},u_n(x)\right) - \widetilde{W}_H(u_n(x)) \,\right] \dx \,\right| = 0,
\]
where, for $p\in\R^d$, we set
\begin{equation}\label{eq:widetildeW}
\widetilde{W}_H(p):=\int_Q \widetilde{W}(x,p) \dx.
\end{equation}
\end{lemma}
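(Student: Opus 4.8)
The plan is to exploit the fact that, for every fixed $p\in\R^d$, the difference $g(y,p):=\widetilde{W}(y,p)-\widetilde{W}_H(p)$ is $Q$-periodic in $y$ and has zero average over the unit cell, $\int_Q g(y,p)\dy=0$. Thus the oscillatory integrand $g(x/\e_n,u_n(x))$ should nearly cancel cell by cell, and the only obstruction is that $u_n$ is not constant on a cell. Concretely, I would tile $\o$ by the period cubes $Q^n_k:=\e_n(k+Q)$, $k\in\Z^N$, which are exactly the cells of periodicity of $x\mapsto\widetilde{W}(x/\e_n,p)$. Writing $\mathcal I_n:=\{k:Q^n_k\subset\o\}$ and $B_n:=\o\setminus\bigcup_{k\in\mathcal I_n}Q^n_k$ for the boundary layer, the integral over $\o$ splits into an interior sum and a boundary remainder.

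On each interior cube I freeze $u_n$ at its mean value $\bar p_k:=\frac{1}{|Q^n_k|}\int_{Q^n_k}u_n\dx$ and decompose
\[
\int_{Q^n_k} g\!\left(\tfrac{x}{\e_n},u_n(x)\right)\dx
=\int_{Q^n_k}\!\left[g\!\left(\tfrac{x}{\e_n},u_n(x)\right)-g\!\left(\tfrac{x}{\e_n},\bar p_k\right)\right]\dx
+\int_{Q^n_k} g\!\left(\tfrac{x}{\e_n},\bar p_k\right)\dx .
\]
The second term vanishes identically: the change of variables $y=x/\e_n$ turns it into $\e_n^N\int_{k+Q}g(y,\bar p_k)\dy=\e_n^N\int_Q g(y,\bar p_k)\dy=0$, by periodicity and the zero-mean property. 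For the first term I use that $\widetilde{W}$ is globally Lipschitz in $p$ (as noted after Definition~\ref{def:Wtilde}), hence so are $\widetilde{W}_H$ and $g$, with some constant; together with the Poincar\'e--Wirtinger inequality on a cube of side $\e_n$ this gives
\[
\left|\int_{Q^n_k}\!\left[g\!\left(\tfrac{x}{\e_n},u_n\right)-g\!\left(\tfrac{x}{\e_n},\bar p_k\right)\right]\dx\right|
\le C\int_{Q^n_k}|u_n-\bar p_k|\dx
\le C\,\e_n\int_{Q^n_k}|\nabla u_n|\dx .
\]

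Summing over $k\in\mathcal I_n$, applying Cauchy--Schwarz together with the energy bound \eqref{eq:condition} (which yields $\int_\o|\nabla u_n|\dx\le|\o|^{1/2}(\int_\o|\nabla u_n|^2\dx)^{1/2}\le C\,\del_n^{-1/2}$), and dividing by $\del_n$, I obtain
\[
\frac{1}{\del_n}\,\Bigl|\sum_{k\in\mathcal I_n}\int_{Q^n_k} g\Bigr|
\le\frac{C\,\e_n}{\del_n}\int_\o|\nabla u_n|\dx
\le C\,\frac{\e_n}{\del_n^{3/2}}\longrightarrow 0 ,
\]
which is exactly the standing hypothesis. This is the point at which the exponent $3/2$ enters, as the product of the single factor $\e_n$ gained from averaging, the prefactor $\del_n^{-1}$, and the $\del_n^{-1/2}$ growth of $\|\nabla u_n\|_{L^1}$. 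It remains to control the boundary layer $B_n$, where cancellation is unavailable; there I use $0\le\widetilde{W}\le M$, so $|g|\le 2M$, while the Lipschitz regularity of $\partial\o$ gives $|B_n|=O(\e_n)$, whence $\frac{1}{\del_n}\int_{B_n}|g|\dx\le C\,\e_n/\del_n\to 0$ (note $\e_n/\del_n\to0$ follows from $\e_n/\del_n^{3/2}\to0$ and $\del_n\to0$). Combining the interior and boundary estimates gives the claim.

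The main obstacle is conceptual rather than computational: recognizing that the truncation is precisely what makes $g$ \emph{globally} Lipschitz in $p$, so that the per-cell error is bounded by $\e_n\int|\nabla u_n|$ with a constant independent of the (possibly large) values of $u_n$, and then tracking the three powers of $\del_n$ carefully enough to see that $\e_n/\del_n^{3/2}\to0$ is exactly what the argument consumes. The scaling of the Poincar\'e constant in $\e_n$ and the measure estimate $|B_n|=O(\e_n)$ for the boundary layer are the only routine ingredients.
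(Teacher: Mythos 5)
Your proposal is correct and follows essentially the same argument as the paper: tile $\o$ by period cells plus an $O(\e_n)$ boundary layer, use periodicity to cancel the cell averages, control the within-cell oscillation of $u_n$ via the Lipschitz bound on $\widetilde{W}$ and the Poincar\'e inequality at scale $\e_n$, and close with Cauchy--Schwarz and the hypothesis $\e_n/\del_n^{3/2}\to0$. The only cosmetic difference is that the paper symmetrizes a double integral over $Q\times Q$ rather than freezing $u_n$ at the cell mean and invoking the zero-mean property of $g(\cdot,p)$ directly, but the two computations are equivalent.
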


\begin{proof}
Let
\begin{equation}\label{eq:T}
T:=\sup_{n \in \N} \int_{\o} \del_n |\nabla u_n|^2 \dx < \infty.
\end{equation}
%
Write
\[
\o = \bigcup_{i=1}^{M_n} Q(p_i, \e_n) \cup R_n,
\]
where $p_i \in \e_n \Z^N$, $R_n$ is the set of cubes $Q(z,\e_n)$ with $z\in\e_n\Z^N$ such that $Q(z,\e_n)\cap\partial\o\neq\emptyset$, and $M_n\in\N$.
Note that
\begin{align}
& \left| \frac{1}{\delta_n} \int_{\bigcup_{i=1}^{M_n} Q(p_i, \e_n) }
    \left(  \widetilde{W} \left(\frac{x}{\e_n}, u_n\right) - \widetilde{W}_H(u_n)\right)  \dx  \right| \notag \\
&\hspace{3cm} \leq \frac{1}{\delta_n} \sum_{i=1}^{M_n} \left| \int_{Q(p_i, \e_n) } 
    \left(\, \widetilde{W} \left(\frac{x}{\e_n}, u_n\right) - \widetilde{W}_H(u_n) \,\right) \dx \right| \notag \\
& \hspace{3cm} = \frac{\e_n^N}{\delta_n} \sum_{i=1}^M
    \left| \int_Q  \left(\, \widetilde{W}(y, u_n (p_i + \e_n y )) - \widetilde{W}_H(u_n(p_i + \e_n y) \,\right) \dy \right|.
\label{eq:cubesHom}
\end{align}
where in the last step we have used the substitution $x = p_i + \e y$ and noting that
$\widetilde{W}\left(y - \frac{p_i}{\e_n}, \cdot \right) = \widetilde{W}(y, \cdot)$ by periodicity.
From here, we can rewrite \eqref{eq:cubesHom} as
\begin{align}\label{eq:cubesHom1}
&\frac{\e_n^N}{\delta_n} \sum_{i=1}^{M_n} \left| \int_Q  \int_Q  \left( \widetilde{W}(y, u_n (p_i + \e_n y )) - \widetilde{W}(z, u_n(p_i + \e_n y)) \right) \dd z \dy \right| \notag\\
& \ \ \ \ \ \ = \frac{\e_n^N}{\delta_n} \sum_{i=1}^{M_n} \left| \int_Q  \int_Q  \left( \widetilde{W}(y, u_n (p_i + \e_n y )) - \widetilde{W}(y, u_n(p_i + \e_n z)) \right) \dd z \dy \right| \notag\\
& \ \ \ \ \ \ \leq \frac{\e_n^N}{\delta_n} \sum_{i=1}^{M_n} \int_Q  \int_Q \left|   \widetilde{W}(y, u_n (p_i + \e_n y )) - \widetilde{W}(y, u_n(p_i + \e_n z)) \right| \dd z \dy \notag\\ 
& \ \ \ \ \ \ \leq \frac{L \e_n^N}{\delta_n} \sum_{i=1}^{M_n} \int_Q  \int_Q  \left| u_n (p_i + \e_n y )) - u_n(p_i + \e_n z)) \right| \dd z \dy \notag \\
& \ \ \ \ \ \ \leq \frac{L \e_n^N}{\delta_n} \sum_{i=1}^{M_n} \left( \int_Q  \int_Q  \left| u_n (p_i + \e_n y ) - \overline{u}_{i,n}  \right| \dd z \dy
    + \int_Q  \int_Q  \left|\overline{u}_{i,n} - u_n (p_i + \e_n z )  \right| \dd z \dy \right)
\end{align}
where in the second to last step  $L>0$ is the Lipschitz constant of $\widetilde{W}$, and we define
\[
\overline{u}_{i,n} := \int_{Q} u_n (p_i + \e_n z ) \dd z.
\]
By symmetry, the last term in \eqref{eq:cubesHom1} can be written as
\begin{align*}
\frac{2L \e_n^N}{\delta_n} \sum_{i=1}^{M_n}  \int_Q  \int_Q 
    \left| u_n (p_i + \e_n y ) - \overline{u}_{i,n}  \right| \dd z \dy
    &= \frac{2L}{\delta_n} \sum_{i=1}^{M_n}  \int_Q  \left| u_n (p_i + \e_n y )
        - \overline{u}_{i,n}  \right| \dy.
\end{align*}
By the Poincar\'e inequality
\begin{align}\label{eq:cubesHom2}
\frac{2L}{\delta_n} \sum_{i=1}^{M_n}  \int_Q  \left| u_n (p_i + \e_n y )
        - \overline{u}_{i,n}  \right| \dy
& \leq \frac{2 C L \e_n^{N+1}}{\delta_n} \sum_{i=1}^{M_n}  \int_Q |\nabla u_n(p_i + \e_n y)| \dy
    \notag\\
&=\frac{2CL \e_n}{\delta_n} \sum_{i=1}^{M_n}  \int_{Q(p_i, \e_n)} |\nabla u_n(x)| \dx \notag\\
    &\leq \frac{2CL \e_n}{\delta_n} \int_\o |\nabla u_n| \dx  \notag\\
&\leq \frac{2\tilde{C}L \e_n}{\delta_n} |\o|^{\frac{1}{2}} \left( \int_{\o} |\nabla u_{\e}|^2 \dx \right)^{\frac{1}{2}} \notag\\
& = \frac{2\tilde{C}L \e_n}{\delta_n^{\frac{3}{2}}}
    \left( \delta_n \int_{\o} |\nabla u_n|^2 \dx \right)^{\frac{1}{2}} \notag\\
&\leq \frac{2\tilde{C}L \e_n}{\delta_n^{\frac{3}{2}}} T,
\end{align}
where $T\in[0,\infty)$ is defined in \eqref{eq:T}.
Using \eqref{eq:cubesHom}, \eqref{eq:cubesHom1} and \eqref{eq:cubesHom2}, we conclude that
\begin{equation}\label{eq:cH}
\lim_{n \to \infty}  \left| \frac{1}{\delta_n} \int_{\bigcup_{i=1}^{M_n} Q(p_i, \e_n)}
    \left(  \widetilde{W} \left(\frac{x}{\e_n}, u_n\right) - \widetilde{W}_H(u_n)\right)  \dx  \right|  = 0.
\end{equation}

Noticing that $\widetilde{W}$ and $\widetilde{W_H}$ are bounded and $|R_n|\leq C\e_n$ we get
\begin{equation}\label{eq:cH1}
\lim_{n\to\infty}  \left| \frac{1}{\delta_n} \int_{R_n} \left(  \widetilde{W} \left(\frac{x}{\e_n}, u_n\right) - \widetilde{W}_H(u_n)\right)  \dx  \right|  = 0.
\end{equation}
Thus, from \eqref{eq:cH} and \eqref{eq:cH1} we conclude.
\end{proof}

With Lemma \ref{lem:wHom}, we may proceed to prove the $\Gamma$-convergence result stated in Theorem $\ref{thm:homFirst}$.

\begin{proof}[Proof of Theorem \ref{thm:homFirst}]
\emph{Step 1: compactness.} Let $\{u_n\}_{n\in\N} \subset H^1(\o; \R^d)$ be a sequence with
\[
\sup_{n\in\N}F_n(u_n)<+\infty.
\]
Then we have
\[
\sup_{n \in \N} \int_{\o} \del_n |\nabla u_n|^2 \dx < \infty
\]
and thus, since $\widetilde{W}\leq W$, we can apply Lemma \ref{lem:wHom} to conclude
\[
\sup_{n\in\N}F^H_n(u_n)<+\infty.
\]
Thus, by classical results (see, for instance, \cite[Theorem 4.1]{fonseca89}) we get that, up to a subsequence
$u_n\to u$ in $L^1(\o;\R^d)$ with $u\in BV(\o;\{a,b\})$.\\

\emph{Step 2: liminf inequality.} Let $\{u_n\}_{n\in\N} \subset H^1(\o; \R^d)$ with $u_n\to u$ in $L^1(\o;\R^d)$ . In order to prove 
\[
F^H_0(u) \leq \liminf_{n \to \infty} F_n(u_n).
\]
Since this is vacuously true if the right hand side is positive infinity, without loss of generality, we restrict ourselves to the case where
\begin{equation}\label{eq:liminf}
\lim_{n \to \infty} F_n(u_n)=\liminf_{n \to \infty} F_n(u_n) < + \infty.
\end{equation}
Using Step 1, we get that $u\in BV(\o;\{a,b\})$. Moreover, noticing that, by definition of $M$ (see Definition \ref{def:Wtilde}), we get
\begin{equation}\label{{eq:KHmodified}}
K_H= 2 \inf \left\{ \int_0^1 \sqrt{\widetilde{W}_H(g(s))} |g'(s)| ds : g\in C^1_{pw}([0,1];\R^d;a,b) \,\right\},
\end{equation}
where $\widetilde{W}$ is defined in \eqref{eq:widetildeW}.
Thus, using standard results
(see, for instance, \cite[Theorem 3.4]{fonseca89}), we get
\[
F^H_0(u)\leq \liminf_{n\to\infty} F^H_n(u_n) \leq \liminf_{n\in\N} F_n(u_n),
\]
where in the last step we used Lemma \ref{lem:wHom} noting that \eqref{eq:liminf} yields the validity of \eqref{eq:condition}.\\

\emph{Step 3: limsup inequality.} 
Let $u \in BV(\o; \{a,b\})$. We want to find a sequence $\{u_n\}_{n\in\N} \subset H^1(\o; \R^d)$ with $u_n\to u$ in $L^1(\o;\R^d)$ such that
\[
F^H_0(u) \geq \limsup_{n \to \infty} F_n(u_n).
\]
Since $F^H_0$ is the $\Gamma$-limit of $F^H_{\e}$ (again, because the constant $K_H$ is the same regardless of truncation) we can find a sequence $\{u_n\}_{n\in\N} \subset H^1(\o; \R^d)$ with $u_n\to u$ in $L^1(\o;\R^d)$ such that
\[
F^H_0(u) \geq \limsup_{n \to \infty} F^H_{\e_n}(u_n).
\]
Moreover, by our choice of truncation, $|u_n|\leq R$, so that $W(x,u_n(x))=\widetilde{W}(x,u_n(x))$ for a.e. $x\in\o$.
Note that
\[
\sup_{n\in\N}\int_{\o} \del_n |\nabla u_{n}|^2 \dx<+\infty
\]
and thus, we can apply Lemma \ref{lem:wHom} to conclude
\[
\limsup_{n \to \infty} F_{\e_{n}}(u_{n}) = \limsup_{n \to \infty} F^H_{\e_{n}}(u_{n})
\]
In particular, we have
\[
F^H_0(u) \geq \limsup_{n \to \infty} F_n(u_n).
\]

\end{proof}


\subsection*{Acknowledgement}

I would like to thank the Center for Nonlinear Analysis at Carnegie Mellon University for its support during the preparation of the manuscript. Adrian Hagerty was supported by the National Science Foundation under Grant No. DMS-1411646.


\bibliographystyle{siam}
\bibliography{Bibliography}

\end{document}